\documentclass[final]{siamltexmm}
\usepackage{amsmath,amsfonts}

\def\onehalf{\tfrac{1}{2}}
\def\E{{\bf E}}
\def\neff{{n_{\rm eff}}}
\def\eq#1{Eq.~(\ref{#1})}

\def\mfe{B_F(\epsilon)}

\def\emcd{\epsilon_F(\delta)}

\newcounter{example}

\title{On the application of McDiarmid's inequality to complex
  systems\thanks{This work supported by Quantification Methods, a part
    of NNSA's Advanced Simulation and Computing Program, and from the
    Department of Energy under contract DE-AC52-06NA25396.}}

\author{Timothy C. Wallstrom\thanks{Theoretical Division, Los Alamos
    National Laboratory, Los Alamos, NM 87545, \email{tcw@lanl.gov}.}}

\begin{document}
\maketitle
\newcommand{\slugmaster}{}

\begin{abstract}
  McDiarmid's inequality has recently been proposed as a tool for
  setting margin requirements for complex systems. If $F$ is the
  bounded output of a complex system, depending on a vector of $n$
  bounded inputs, this inequality provides a bound $B_F(\epsilon)$,
  such that the probability of a deviation exceeding $B_F(\epsilon)$
  is less than $\epsilon$. I compare this bound with the absolute
  bound, based on the range of $F$. I show that when $\neff$, the
  effective number of independent variates, is small, and when
  $\epsilon$ is small, the absolute bound is smaller than
  $B_F(\epsilon)$, while also providing a smaller probability of
  exceeding the bound, i.e., zero instead of $\epsilon$. Thus, for
  $B_F(\epsilon)$ to be useful, the number of inputs must be large,
  with a small dependence on any single input, which is consistent
  with the usual guidance for application of concentration-of-measure
  results.  When the number of inputs is small, or when a small number
  of inputs account for much of the uncertainty, the absolute bounds
  will provide better results.  The use of absolute bounds is
  equivalent to the original formulation of the method of
  Quantification of Margins and Uncertainties (QMU).
\end{abstract}

\begin{keywords}
  McDiarmid inequality, concentration of measures, Quantification of
  Margins and Uncertainties (QMU), margin, uncertainty quantification,
  certification.
\end{keywords}
\begin{AMS}
60E15, 60F10, 60G50, 60G70, 65C50.
\end{AMS}

\pagestyle{myheadings}
\thispagestyle{plain}
\markboth{T.~C. WALLSTROM}{McDiarmid's inequality and complex systems}

\section{Introduction}
\label{sec:intro}

Let $F(X)=F(X_1,\ldots,X_n)$ be a bounded function of $n$ independent
real and bounded random variates $X_i$. $F$ might, for example,
represent the output of a complex system depending on $n$ random
inputs. We are interested in the fluctuations in $F$, due to
randomness in the inputs, where by fluctuation we mean deviation
$\delta$ from the mean.
McDiarmid's inequality implies that for every $\epsilon>0$, there is a
bound $\mfe$, such that the probability of a fluctuation exceeding
that bound is less than $\epsilon$:
\begin{displaymath}
  P\left(\delta > \mfe\right) < \epsilon.
\end{displaymath}
We call $\mfe$ the \textit{McDiarmid bound}.  If the inputs
induce comparable deviations  in $F$, then as $n$
increases, $\mfe$ grows roughly as $\sqrt{n}$, as we might expect from
the independence of the inputs.

There is, however, another bound for the size of these fluctuations,
which is so trivial that it tends to escape notice. Assume for
simplicity that the mean of $F$ is midway between its minimum and
maximum values. Then the largest possible fluctuation is
$\onehalf\Delta F$, where $\Delta F = F_{\max} - F_{\min}$.  When this
bound applies, it is very effective: the probability of exceeding the
bound is not just small; it is zero. This trivial bound is not useful
when $n$ is large, however. Unless we make further assumptions, it
scales as $n$, so as $n$ gets large, it rapidly loses out to $\mfe$,
which scales only as $\sqrt{n}$. 

The purpose of this paper is to observe that when $n$ is small, the
opposite is true: the McDiarmid bound is not useful, because it is
larger than the trivial bound for small $\epsilon$. Our heuristics
have assumed that each $X_i$ has roughly the same impact on
$F$, but in general, this may not be true and we need to use $\neff$,
the effective number of independent variables, as defined
below. $\neff$ can be small even when $n$ is large.  Specifically, we
will show that
\begin{equation}
  \label{eq:mcd1}
\mfe \ge  \frac{1}{2}\Delta F\,\sqrt{\frac{2}{\neff}\log \frac{1}{\epsilon}}.
\end{equation}
For McDiarmid's bound to be useful, therefore, it is necessary that
\begin{equation}
  \label{eq:mcd2}
  \neff > 2\log \frac{1}{\epsilon}.
\end{equation}
If, for example, $\epsilon=0.005$, corresponding to a two-sided
probability of exceeding the bound of 1\%, then $\neff$ must be greater
than 10.6. If $\neff < 10.6$, we should always use the absolute bound,
because it is smaller, and ensures that fluctuations exceeding the
bound happen with probability zero, rather than probability
$\epsilon$. 

McDiarmid's inequality has usually been applied in the large $n$
limit. The small $n$ properties are of interest because McDiarmid's
bound has recently been applied to the problem of deducing margin
requirements in complex systems, where $\neff$ is not always
large~\cite{Lucas:2008tr,Topcu:2011vm,Kidane:2012gd,Adams:2012gj,Owhadi:2013eo}.
In particular, it has been proposed, along with more general
concentration-of-measure~(CoM) inequalities, as a way of formalizing
the method of Quantification of Margins and
Uncertainties~(QMU)~\cite{Eardley:2005vu,Ahearne:2008wq,Wallstrom:2011ip}.
Suppose that $F$ describes a complex system, and we want to ensure
that the probability of failure~(POF) is $\epsilon$ or less. Suppose
further that we can engineer our system to function properly for any
fluctuation of size $M$ or less; $M$ is called the \textit{margin}.
If $M> \mfe$, then we know that the POF is less than $\epsilon$. The
problem of ensuring a suitably small POF is sometimes called the
\textit{certification problem}~\cite{Lucas:2008tr,Owhadi:2013eo}.

The McDiarmid inequality appears to provide a general solution to the
certification problem, because for any desired POF $\epsilon$, there
is a bound $\mfe$ that guarantees that the desired POF will not be
exceeded. 
It has never been clear, however, whether or not $\mfe$ is small
enough to be practical or useful.  If the bound is so large that the
margin cannot be made to exceed the bound, the bound will not be
useful.  One might assume that the usefulness of $\mfe$ would depend
on the details of the specific problem, and that nothing meaningful
can be said in general. The usefulness of $\mfe$ appears to be a
problem for simulation studies, which is how it has been addressed up
to this point.

The present result provides a simple necessary condition, depending
only on $\neff$ and $\epsilon$, for the McDiarmid bounds to be an
improvement on the absolute bounds.  It shows that the use of
McDiarmid bounds is only potentially an improvement for a very
specific type of problem, in which the output depends on a large
number of independent inputs, and depends only weakly on any
individual input.  For examples of such models, see
references~\cite{Lucas:2008tr} and~\cite{Topcu:2011vm}. If, on the
other hand, the number of inputs is small, or if a small number of the
inputs contribute most of the uncertainty (which leads to a small
$\neff$), then the McDiarmid bound will be larger than the trival
bound, even for moderately demanding values of $\epsilon$. We expect
many complex systems to fall into this category.

Consider, for example, the case in which $F(X)$ is a computational
model of a complex system. In such models, $n$ is generally small,
because the computational cost of exploring a high-dimensional space
is prohibitive. For example, if $n=10$ and we allot only two values to
each variate, we already require 1024 runs.  This value of ten is
smaller than the value of 10.6, cited above.  But in any case, the
relevant quantity is not $n$ but the effective number of variates
$\neff$, which accounts for the fact that some inputs have more impact
on $F$ than others, and is defined in Eq.~(\ref{eq:neff}) below.  In the
computational examples we have considered, $\neff$ is often only three
or four.

The effective number of independent variates, $\neff$, is defined in
terms of the McDiarmid diameters $D_i$:
\begin{equation}
  \label{eq:neff}
  \neff = {\left(\sum_i D_i\right)^2\over \sum_i D_i^2} =
  {1\over \sum_i f_i^2},
\end{equation}
where $f_i\equiv D_i/\sum D_i$. The McDiarmid diameters are defined in
\eq{eq:ci}, but roughly, $D_i$ represents the maximum variation in $F$
that can be caused by variations in $X_i$, with the other $X_j$ held
fixed. If the $D_i$ are all the same, then $\neff=n$. If, however, a
few of the inputs dominate the uncertainty, then $\neff$ will also be
small, regardless of how large $n$ is. For example, if $m$ of the
$D_i$ are identical and the rest are zero, then $\neff=m$. The best
estimate of the relative contribution of $X_i$ to the uncertainty is
$f_i^2$, since, as we see below~(\eq{eq:sys}), the $D_i$ add in
quadrature to form the system diameter $D_F$. Since $\neff^{-1}=\sum
f_i^2>f^2_{\max}$, we have that $\neff < 1/f_{\max}^2$, where $f_{\max}$
is the largest~$f_i$. Thus, for example, if one input contributes more
than a fourth of the uncertainty, $\neff < 4$, regardless of how large
$n$ is. In this way, $\neff$ quantifies the requirement that, in order
for the McDiarmid margin to be useful, no single input can contribute
very much to the uncertainty.

For simplicity, we have so far assumed that the mean of $F$ is midway
between its extremes, which is often approximately true. In general,
the mean may not be in the center, in which case the absolute bounds
on deviations from the mean can be written $r_\pm\Delta F$, where
$r_+$ and $r_-$ are the bounds for positive and negative deviations,
and $r_++r_-=1$. The corresponding bound on $\neff$ can be obtained
from~\eq{eq:mcd1} by rewriting the right-hand-side in terms of
$r\Delta F$, where $r=r_+$ or $r_-$. If deviations in both directions
are of interest, then we will have $r<\onehalf$ for either the
positive or negative deviations, so that the condition on $\neff$ will
be even more demanding. In some cases, deviations in only one
direction will be of interest and the corresponding $r$ may approach
one. In such cases, requirements on $\neff$ can be
reduced by a factor of up to four.

We present our main result in the next section, and then discuss 
the consequences for uncertainty quantification in the final section.


\section{Main result}
\label{sec:main}

I now define the terms and prove the main result.  Let
$X=(X_1,\ldots,X_n)$, where each $X_i$ is an independent random
variable. Let $F(x)$ be some function of $x=(x_1,\ldots,x_n)$, which
is bounded for $x_i$ in the range of $X_i$. 
The \textit{McDiarmid diameters} of $F$ are defined as
\begin{equation}
  \label{eq:ci}
  D_i \equiv \sup_{x_1,x_2,\ldots,x_i,x'_i,\ldots,x_n} 
  \left| F(x_1,\ldots,x_i,\ldots, x_n) - F(x_1,\ldots,x'_i,\ldots, x_n) \right|.
\end{equation}
The supremum and infimum, here and elsewhere, are always taken over the
values in the range of the $X_i$. The McDiarmid \textit{system diameter} is
\begin{equation}
  \label{eq:sys}
  D_F^2 = D_1^2 + D_2^2 + \cdots + D_n^2.
\end{equation}
The McDiarmid inequality~\cite{McDiarmid:1989vo} states that
\begin{equation}
  \label{eq:mcd}
  P(F(X) > \E F + \delta) \le \exp\left[-{2 \delta^2\over D_F^2}\right]\equiv \emcd.
\end{equation}
The same bound applies to $P(F(X) < \E F - \delta)$.  
The McDiarmid bound is obtained by inverting $\emcd$:
\begin{equation}
  \label{eq:meps}
  \mfe = D_F\sqrt{\frac{1}{2}\log{1\over\epsilon}  }.
\end{equation}

Using~\eq{eq:neff}, we may rewrite~\eq{eq:meps} as follows:
\begin{equation}
  \label{eq:meps-alt}
  \mfe = \left({\textstyle\sum_{i=1}^n D_i}\right)\cdot\sqrt{\frac{1}{2\neff}\log{1\over\epsilon}  }.
\end{equation}
Although this may seem like a step backwards, because we are separating
the dependence on the $D_i$ into two parts, it is useful because we
can show that $\sum D_i\ge \Delta F$. We thereby obtain our main result.

\begin{theorem} 
  Let $\mfe$ be the McDiarmid bound for probability $\epsilon$,
  and let $\Delta F =\sup_x F(x) - \inf_x F(x)$. Then
  \begin{equation}
    \label{eq:main1}
    \mfe \ge  \Delta F\,\sqrt{{1\over 2\neff}\log \frac{1}{\epsilon}}.
  \end{equation}
\end{theorem}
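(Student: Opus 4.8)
The plan is to reduce the entire theorem to the single inequality $\sum_{i=1}^n D_i \ge \Delta F$, after which the result follows by inspection. Comparing the target bound \eq{eq:main1} with the alternative form \eq{eq:meps-alt} of the McDiarmid bound, the two expressions differ only in that \eq{eq:meps-alt} carries the factor $\sum_i D_i$ where \eq{eq:main1} carries $\Delta F$; the radical $\sqrt{(1/2\neff)\log(1/\epsilon)}$ is common to both and is nonnegative since $\epsilon\le 1$. Hence, once $\sum_i D_i \ge \Delta F$ is in hand, multiplying through by that common radical delivers \eq{eq:main1} at once.

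First I would establish $\sum_i D_i \ge \Delta F$ by a telescoping argument along the coordinate directions. Fix $\eta>0$ and choose points $a=(a_1,\dots,a_n)$ and $b=(b_1,\dots,b_n)$ in the range of $X$ with $F(a) > \sup_x F(x) - \eta$ and $F(b) < \inf_x F(x) + \eta$, so that $F(a)-F(b) > \Delta F - 2\eta$. I then interpolate from $b$ to $a$ one coordinate at a time, setting $y^{(0)}=b$ and $y^{(k)}=(a_1,\dots,a_k,b_{k+1},\dots,b_n)$, so that $y^{(n)}=a$. Telescoping gives
\begin{displaymath}
  F(a)-F(b)=\sum_{i=1}^n \left[F(y^{(i)})-F(y^{(i-1)})\right].
\end{displaymath}
Since $y^{(i)}$ and $y^{(i-1)}$ differ only in their $i$th coordinate, the definition \eq{eq:ci} of the McDiarmid diameter yields $|F(y^{(i)})-F(y^{(i-1)})|\le D_i$, and the triangle inequality then bounds the sum by $\sum_i D_i$. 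Combining, $\sum_i D_i \ge F(a)-F(b) > \Delta F - 2\eta$, and letting $\eta\to 0$ gives $\sum_i D_i \ge \Delta F$.

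The only real subtlety, and the step I would treat most carefully, is that the supremum and infimum defining $\Delta F$ need not be attained on the range of $X$; this is exactly why I introduce the near-optimizers $a$, $b$ together with the slack parameter $\eta$ rather than working with exact extremizers. Beyond this, the argument is entirely elementary: it rests only on the definition of $D_i$, the triangle inequality, and the observation that each interpolation step perturbs a single coordinate, so each increment is controlled by the matching diameter. Substituting $\sum_i D_i\ge \Delta F$ into \eq{eq:meps-alt} then completes the proof.
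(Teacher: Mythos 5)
Your proof is correct and follows essentially the same route as the paper's: both reduce the theorem to the inequality $\sum_i D_i \ge \Delta F$ via a coordinate-by-coordinate telescoping decomposition, the triangle inequality, and the definition \eq{eq:ci} of the diameters, then substitute into \eq{eq:meps-alt}. Your explicit handling of possibly non-attained extrema via the near-optimizers $a$, $b$ and the slack $\eta$ is a small tidying of the same argument (the paper works directly with suprema over $x,x'$), not a different method.
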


\begin{proof}
By~\eq{eq:meps-alt}, it suffices to prove that $\Delta F \le \sum D_i$.
But 
\begin{eqnarray*}
\Delta F &=& \sup_{x,x'} |F(x) - F(x')|\\
   &\le& \sup_{x,x'}\Big| F(x_1,x_2,\ldots,x_n) - F(x'_1,x_2,\ldots,x_n) +\cdots \\
    &&      +\; F(\ldots,x'_{i-1},x_i,x_{i+1}\ldots) - F(\ldots,x'_{i-1},x'_i,x_{i+1}\ldots)+\cdots  \\
    &&      +\; F(x'_1,x'_2, \ldots,x'_{n-1},x_n) - F(x'_1,x'_2, \ldots,x'_{n-1},x'_n)\Big| \\
   &\le& \sup_{x,x'} \sum_i \left|F(\ldots,x'_{i-1},x_i,x_{i+1}\ldots) - F(\ldots,x'_{i-1},x'_i,x_{i+1}\ldots) \right| \\
   &\le& \sum_i \sup_{x,x'} \left|F(\ldots,x'_{i-1},x_i,x_{i+1}\ldots) - F(\ldots,x'_{i-1},x'_i,x_{i+1}\ldots) \right| \\
   &\le& \sum_i D_i.
\end{eqnarray*}
\end{proof}

  The bound is an inequality because $\sum_i D_i$ may be strictly greater
  than $\Delta F$.  It is easy to show that equality holds if $F$ is
  linear in its inputs. In general, however, when there are
  interactions between the variables, the bound may be weak. For
  example, if $F = \prod_{i=1}^n x_i$, and the range of each $X_i$ is
  $[0,1]$, then $\Delta F = 1$, but $\sum D_i = n$. 

\section{Discussion}
\label{sec:disc}

Let us return to the question of how to ensure system reliability, in
view of these results. In most applications, $\neff$ will be whatever
it is, and there will be little or no freedom to change it. It may
well be small. The only question is whether we can make the margins
large enough  to ensure a suitably small POF. For many typical
systems, neither the absolute bound nor the McDiarmid bound is likely
to be of much help, because they will both require unattainable
margins.

As we have shown, if $\neff$ is small, then we are best off using the
absolute bound.  In practice, however, this bound may already be much
too large. To use this bound, we must engineer the system so that it
will work properly, even under the largest possible deviation that can
possibly occur. That is, we need to engineer the system so that
\textit{it cannot possibly fail}. In many systems, this will not be
possible.

If we do have the good fortune of a large $\neff$, then we might hope
to need a smaller margin, by virtue of the McDiarmid bounds.  However,
the concentration of measure effect kicks in very slowly, and it
requires a very large $\neff$ in order to significantly reduce the
required margin below the absolute margin. Thus, suppose that it is
only possible to engineer the system so that the margin $M$ is some
fraction $r$ of the possible range: $M=r\Delta F$. 
From~Eq.~(\ref{eq:mcd1}), we find that if $M=r\Delta F$, then
\begin{equation}
  \label{eq:mcd-r}
  \mfe \ge M\,\sqrt{\frac{1}{2r^2\,\neff}\log \frac{1}{\epsilon}},
\end{equation}
so that $\mfe$ will be greater than $M$ unless
\begin{displaymath}
  \neff > \frac{1}{2r^2}\log\frac{1}{\epsilon}.
\end{displaymath}
For $r=0.1$ and $\epsilon=0.005$, for example, we would need
$\neff>265$. 

The reason these results are so weak is that we have made no
assumptions on the input distributions, apart from their bounds.  This
is quite explicit in the case of the absolute bound, where we only use
$\Delta F$. In the McDiarmid bounds, we use only the information about
the maximum and minimum values of $F$, under variations in the $x_i$,
and no information about how likely these extreme values are. For this
reason, any bounds we obtain must be valid for the most unfavorable
distributions, which for example might have half their mass sitting on
the minimum, and half on the maximum. Such configurations are often
known to be highly unlikely or impossible, so these analyses may not
be using important information. For an approach that uses such
information, see Ref.~\cite{Wallstrom:2011ip}.

Finally, it is interesting to note that the use of absolute bounds is
essentially QMU, in its original
formulation~\cite{Goodwin:2006ud}. $F$ is the \textit{metric}, the
deviations are measured with respect to the \textit{design point},
$\onehalf (F_{\max} + F_{\min})$, $\onehalf \Delta F$ is the
uncertainty $U$, and $M/U$, which should be greater than one, is the
\textit{confidence ratio}.  (For simplicity, I again assume symmetry
for positive and negative deviations, but these conditions can
trivially be made asymmetric.) The condition that $M/U>1$ is identical
to our condition that $M$ be greater than the bound on the
fluctuations.  Formally, QMU provides a POF of zero.  Thus, our result
can be stated concisely as follows: For small $\neff$ and $\epsilon$,
QMU provides a POF of zero with smaller margins than those required by
the McDiarmid bounds for a POF of $\epsilon$.  QMU was designed for
extremely reliable systems, as a way of certifying that if the margins
of key quantities were sufficiently large, the system could not
possibly fail. In such systems, it may be feasible to use the absolute
bound.

\section*{Acknowledgments}

This work grew out of an attempt to understand simulation results of
Fran\c cois Hemez and Christopher Stull. I thank them for bringing
this interesting problem to my attention.


\end{document}